\definecolor{lightgray}{rgb}{0.8, 0.8, 0.8}
\definecolor{darkgray}{rgb}{0.65, 0.65, 0.65}
\newcounter{todocounter}
\theoremstyle{plain}
\newtheorem{theorem}{Theorem}
\newtheorem{lemma}[theorem]{Lemma}
\theoremstyle{definition}
\newfont{\footsc}{cmcsc10 at 8truept}
\newfont{\footbf}{cmbx10 at 8truept}
\newfont{\footrm}{cmr10 at 10truept}
\renewenvironment{abstract}%
                {
                  \begin{list}{}%
                     {\setlength{\rightmargin}{1in}%
                      \setlength{\leftmargin}{1in}}%
                   \item[]\ignorespaces\begin{small}}%
                 {\end{small}\unskip\end{list}}
\let\start@align@nopar\start@align
\let\start@gather@nopar\start@gather
\let\start@multline@nopar\start@multline
\long\def\start@align{\par\start@align@nopar}
\long\def\start@gather{\par\start@gather@nopar}
\long\def\start@multline{\par\start@multline@nopar}
\title{\sc On the Lettericity of Paths}
\author{
	Robert Ferguson\\[-0.25ex]
	\small Department of Mathematics\\[-0.5ex]
	\small University of Florida\\[-0.5ex]
	\small Gainesville, Florida
}
\date{}
\begin{document}
\maketitle


\pagestyle{main}

\begin{abstract}
Verifying a conjecture of Petkov{\v{s}}ec, we prove that the lettericity of an n-vertex path is precisely $\left\lfloor \frac{n+4}{3}\right\rfloor$.
\end{abstract}

\section{Introduction}

The concept of lettericity was introduced in 2000 by Petkov{\v{s}}ec~\cite{petkovsek:letter-graphs-a:}. We begin by presenting his definitions. Let $\Sigma$ be a finite alphabet, and consider $D \subseteq \Sigma^2$, which we call the \emph{decoder}. Then for a word $w = w_1w_2\dots w_n$ with each $w_i \in \Sigma$, the \emph{letter graph} of $w$ is the graph $\Gamma_D(w)$ with  $V(\Gamma_D(w)) = \{1,2,\dots,n\}$ and $(i,j) \in E(\Gamma_D(w))$ if and only if $(w_i,w_j) \in D$.

If $\Sigma$ is an alphabet of size $k$, we say that $\Gamma_D(w)$ is a $k$-letter graph. For some graph $G$, the minimum $k$ such that a $G$ is a $k$-letter graph is known as the \emph{lettericity} of $G$, denoted $\ell(G)$.  Note that every finite graph is the letter graph of some word over some alphabet, and in particular the lettericity of a graph $G$ is at most $|V(G)|$.

Petkov{\v{s}}ec determined bounds or precise values for the lettericity of a number of different families of graphs, most notably threshold graphs, cycles, and paths. We focus our attention on paths, proving a conjecture of Pekov\v{s}ec's and giving a precise value for their lettericity. Before we begin our proof, however, we first introduce a few pieces of additional notation.

Given a letter graph $\Gamma_D(w)$ and some letter $a \in \Sigma$, we then say that $a$ \emph{encodes} the set of vertices that correspond to some instance of $a$ in the word. In particular, these vertices must form a clique if $(a,a) \in D$, and an anticlique otherwise. Further, given a graph $G$ such that $G = \Gamma_D(w)$, we say that $\Gamma_D(w)$ is a \emph{lettering} of $G$, and in particular an \emph{$r$-lettering} if $w$ uses an alphabet of size $r$.

\section{Lemmas}

We now establish a few lemmas necessary for the proof of our theorem. We begin with a simple but useful property of letter graphs.

\begin{lemma}
	If a letter graph $\Gamma_D(w)$ has some pair of vertices with indices $i$ and $k$ such that $i < k$ and $w_i = w_k$, and this pair is distinguished by some third vertex $j$ (that is, $j$ is adjacent to exactly one of $i$ and $k$), then it must be that $i < j < k$.
\end{lemma}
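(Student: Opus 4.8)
The plan is to argue by contradiction, leaning on the fact that in a letter graph the adjacency between two vertices is governed entirely by the \emph{ordered} pair of their letters, read in increasing order of index. Concretely, write $a = w_i = w_k$ and $b = w_j$, and recall that for any two indices $p < q$ the edge $\{p,q\}$ lies in $\Gamma_D(w)$ precisely when $(w_p, w_q) \in D$. Since the distinguishing vertex $j$ is distinct from both $i$ and $k$, exactly one of three situations holds: $j < i$, or $i < j < k$, or $k < j$. I would show that the first and last situations are incompatible with $j$ distinguishing $i$ and $k$, leaving $i < j < k$ as the only possibility.

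Suppose first that $j < i$ (and hence $j < i < k$). Then $j$ precedes both $i$ and $k$ in index, so by the convention above the edge $\{j, i\}$ is present iff $(b, a) \in D$, and likewise the edge $\{j, k\}$ is present iff $(b, a) \in D$. These two conditions coincide, so $j$ is adjacent to both of $i$ and $k$ or to neither; in either case $j$ fails to distinguish them, a contradiction. The case $k < j$ is symmetric: now $j$ follows both $i$ and $k$, so each of the edges $\{i, j\}$ and $\{k, j\}$ is present iff $(a, b) \in D$, again forcing $j$ to treat $i$ and $k$ identically. Ruling out these two cases leaves $i < j < k$, as claimed.

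The argument is short, so the only real subtlety — and the one point I would be careful to state explicitly — is the adjacency convention itself: an edge is determined by the pair $(w_p, w_q)$ taken in index order $p < q$, not by an unordered pair of letters. Once that is pinned down, the contradiction in each case is immediate, since placing $j$ on the same side of both $i$ and $k$ forces a single ordered letter-pair ($(b,a)$ on the left, $(a,b)$ on the right) to control both potential edges. I would therefore spend essentially no effort on calculation and instead make sure the case split ($j$ to the left of $i$, strictly between $i$ and $k$, or to the right of $k$) is visibly exhaustive, which is exactly where the hypothesis $w_i = w_k$ does its work.
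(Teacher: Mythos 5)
Your proof is correct and follows essentially the same argument as the paper: both split into the cases $j < i < k$ and $i < k < j$, and observe that in each case a single ordered letter-pair ($(w_j,w_i)$ or $(w_i,w_j)$, respectively) determines adjacency to both $i$ and $k$, so $j$ cannot distinguish them. Your explicit emphasis on the index-order adjacency convention is a welcome clarification but does not change the substance of the argument.
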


\begin{proof}
If it were the case that $j < i < k$ or that $i < k < j$, then it would have to be that the vertex $j$ of $\Gamma_D(w)$ is adjacent to either both of the vertices $i$ and $k$ or neither of them, depending on whether $(w_j,w_i) \in D$, in the first case, and $(w_i,w_j) \in D$ in the second. It thus must be that $i < j < k$.
\end{proof}

With this established, we now move on to examining matchings, a family of graphs for which the lettericity is already known.

\begin{lemma}
In any lettering of $rK_2$, no letter encodes more than 2 vertices.
\end{lemma}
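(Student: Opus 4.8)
The plan is to split into two cases according to whether $(a,a) \in D$ for the letter $a$ under consideration. If $(a,a) \in D$, then the vertices encoded by $a$ form a clique; since $rK_2$ is a disjoint union of edges it is triangle-free, so any clique has at most two vertices and the claim holds immediately in this case.

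The substantive case is when $(a,a) \notin D$, so that the vertices encoded by $a$ form an independent set. Here I would argue by contradiction: suppose $a$ encodes at least three vertices, and let $i < j < k$ be the indices of three of them, so $w_i = w_j = w_k = a$. In $rK_2$ every vertex has degree exactly one, so the middle vertex $j$ is matched to a unique partner $s$. Because $i,j,k$ form an independent set, $s$ is distinct from $i$ and $k$, and $s$ is adjacent to $j$ but to neither $i$ nor $k$.

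Now I would apply the preceding lemma twice, using $s$ as the distinguishing vertex. Since $w_i = w_j$, $i < j$, and $s$ is adjacent to exactly one of $i$ and $j$ (namely $j$), the lemma forces $i < s < j$. Likewise, since $w_j = w_k$, $j < k$, and $s$ is adjacent to exactly one of $j$ and $k$ (again $j$), the lemma forces $j < s < k$. These two conclusions assert $s < j$ and $s > j$ simultaneously, a contradiction. Hence no letter with $(a,a) \notin D$ can encode three or more vertices, which together with the clique case completes the proof.

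The only place any cleverness is required is the choice to single out the partner of the \emph{middle} of the three equally-lettered vertices: that partner is squeezed strictly between $i$ and $j$ and strictly between $j$ and $k$ at once, which is impossible. Choosing instead the partner of $i$ or of $k$ yields only mutually consistent inequalities, so the contradiction genuinely hinges on examining the middle vertex.
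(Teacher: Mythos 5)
Your proof is correct and matches the paper's argument essentially step for step: both assume a letter encodes three vertices $i<j<k$, note they must form an anticlique since $rK_2$ has no triangle, take the matching partner of the middle vertex $j$ as the distinguishing vertex, and apply Lemma~1 twice to obtain the contradictory inequalities $i<x<j$ and $j<x<k$. The only cosmetic difference is that you make the clique/anticlique case split explicit, while the paper folds it into a single sentence.
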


\begin{proof}
Suppose there exists some lettering $\Gamma_D(W)$ of $rK_2$ with some letter $a$ that encodes at least three vertices of $\Gamma_D(w)$, say $i$, $j$, and $k$ with $i$ < $j$ < $k$. Our graph contains no cliques of size greater than 2, so these vertices form an anticlique. Each of these vertices is incident with a distinct edge, so there must be some vertex, say $x$, which is adjacent to $j$ but not $i$ or $k$. Then, by Lemma 1 it must be that $i < x < j$, but also that $j < x < k$. This is a clear contradiction, so no such lettering exists.
\end{proof}

With this lemma it is not difficult to show that $\ell(rK_2) = r$, a property noted by Petkov{\v{s}}ec and explicitly proven by Alecu, Lozin and De Werra~\cite{alecu:the-micro-world:}. Our objective here is the additional property our next lemma presents.

\begin{lemma}
In every $r$-lettering of $rK_2$, each letter encodes the two vertices of a $K_2$.
\end{lemma}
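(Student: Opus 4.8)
The plan is to first pin down how many vertices each letter can encode, and only then argue about adjacency. Since $rK_2$ has $2r$ vertices while an $r$-lettering uses an alphabet of size $r$, and Lemma~2 guarantees that each letter encodes at most two vertices, a pigeonhole count forces every one of the $r$ letters to encode \emph{exactly} two vertices. It then remains to show that the two vertices sharing a letter are always adjacent, i.e.\ that each same-letter pair is in fact an edge of $rK_2$.

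To prove this, I would argue by contradiction and pass to an extremal counterexample. Suppose some letter encodes two non-adjacent vertices, and among all such ``non-edge'' pairs select the one, say $\{i,k\}$ with $i<k$ and letter $a$, whose left endpoint $i$ is as small as possible. Because every vertex of $rK_2$ has degree exactly $1$, the vertex $i$ has a unique neighbor $i'$, and since $\{i,k\}$ is a non-edge we have $i' \neq k$; as $i'$ is adjacent to $i$ but not to $k$, Lemma~1 places it strictly between them, so $i < i' < k$. In particular $i' \notin \{i,k\}$, so $w_{i'} \neq a$, and $i'$ carries its own letter $b$, which (again by the exactly-two count) encodes $i'$ together with exactly one other vertex $i''$.

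The crux is to show that $\{i', i''\}$ is itself a non-edge pair lying further to the left, contradicting minimality. If $\{i',i''\}$ were an edge, then $i''$ would be the unique neighbor of $i'$, namely $i$; but $w_i = a \neq b = w_{i''}$, so in fact $\{i',i''\}$ is a non-edge (and $i'' \neq i$). Now $i$ is adjacent to $i'$ but, having $i'$ as its only neighbor, is not adjacent to $i''$; thus $i$ distinguishes the same-letter pair $\{i',i''\}$, and Lemma~1 forces $i$ to lie strictly between $i'$ and $i''$. Since $i' > i$, this means $i'' < i$, so $\{i',i''\}$ is a same-letter non-edge pair with left endpoint smaller than $i$ --- contradicting the choice of $\{i,k\}$. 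Hence no non-edge pair exists, and every letter encodes the two vertices of a $K_2$.

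I expect the main obstacle to be choosing the right extremal quantity. The seemingly natural choice of a \emph{shortest} non-edge pair does not close the argument, because the pair $\{i',i''\}$ produced by Lemma~1 is longer, not shorter, than $\{i,k\}$; minimizing the left endpoint instead is what turns a single application of Lemma~1 into an immediate contradiction.
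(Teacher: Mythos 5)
Your proof is correct and takes essentially the same approach as the paper's: the paper's extremal choice of the earliest-occurring anticlique-encoding letter is exactly your non-edge pair with minimal left endpoint, and both arguments pivot on showing that the letter $b$ of $i$'s neighbor also encodes a non-adjacent pair and then applying Lemma~1 with $i$ as the distinguishing vertex. The only cosmetic difference is how the final contradiction is routed: the paper concludes that $i$ would be adjacent to both $b$-vertices and thus have degree two, while you conclude that the second $b$-vertex precedes $i$, contradicting minimality.
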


\begin{proof}
That each letter encodes exactly two vertices follows easily from Lemma 2. Now suppose $rK_2$ has some other $r$-lettering, and choose $a$ to be the earliest occurring letter that encodes an anticlique. In particular, suppose it first occurs at index $i$. Then vertex $i$ is adjacent to some vertex encoded by a different letter, say $b$. Then $b$ also encodes an anticlique, and by our choice of $a$, both of the vertices it encodes must lie after $i$ in the word. They then must both be adjacent to $i$; since $rK_2$ has no vertices of degree two, no such $r$-lettering exists.
\end{proof}

This gives us the opportunity to count the number of $r$-letterings of $rK_2$; in particular it can be easily calculated that there are $2r!/2^r$ possible choices for $w$. It is, however, more difficult to determine the number of possible decoders for each word, a problem which is beyond the scope of this paper.

\section{Theorem and Proof}

We now prove our main result.

\begin{theorem}
For $n \geq 3$, the lettericity of $P_n$ is $\left\lfloor \frac{n+4}{3}\right\rfloor$.
\end{theorem}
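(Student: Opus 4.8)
The plan is to prove the two matching bounds separately; the real work is the lower bound. Write $r=\ell(P_n)$, and note that the claimed value satisfies $\lfloor (n+4)/3\rfloor=\lceil (n+2)/3\rceil$, so the theorem amounts to showing both that $P_n$ admits a lettering on $\lfloor (n+4)/3\rfloor$ letters and that every lettering of $P_n$ uses at least $\lceil (n+2)/3\rceil$ of them. The latter is equivalent to the bound $n\le 3r-2$ on the number of vertices a path realized on $r$ letters can have, so I would phrase the lower bound as: an $r$-letter graph that is a path has at most $3r-2$ vertices.

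For the upper bound I would give an explicit lettering, built so that each new letter pays for three new vertices. Concretely, I would proceed recursively: given a lettering of $P_{n-3}$, append three vertices to one end of the path and introduce a single new letter, choosing its occurrences and its entries in the decoder so that the only new edges created are the three joining the appended segment to the old endpoint and to each other. Taking each letter class to be an anticlique and arranging the cross-adjacencies in the half-graph pattern forced by a single ordered pair in $D$ (the same pattern that realizes $P_4$ as the word $abab$ with $D=\{(a,b)\}$, whose edges are $(1,2),(1,4),(3,4)$), this yields a word in which all but a bounded number of letters occur exactly three times. Tracking the three residues of $n$ modulo $3$, together with the small base cases, then gives exactly $\lfloor (n+4)/3\rfloor$ letters, matching the lower bound.

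The engine of the lower bound is Lemma 1 together with a structural observation about repeated letters. First I would record that for $n\ge 4$ the path $P_n$ has no two vertices with the same neighbourhood, so any two equal-letter (hence nonadjacent) vertices are distinguished by some third vertex, which by Lemma 1 lies strictly between them in word order. The crucial strengthening is that adjacency between two letter classes is \emph{uniform}: if $a$ occurs at $x_1\prec\cdots\prec x_m$ and $y$ carries a different letter $b$ with $x_t\prec y\prec x_{t+1}$, then whether $y$ is adjacent to each earlier $x_i$ depends only on whether $(a,b)\in D$, and likewise for the later $x_i$ via $(b,a)$. Hence a distinguisher adjacent to $x_t$ is automatically adjacent to the whole prefix $x_1,\dots,x_t$, and one adjacent to $x_{t+1}$ to the whole suffix $x_{t+1},\dots,x_m$. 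Since every vertex of $P_n$ has degree at most two, each gap between consecutive occurrences of $a$ must be of \emph{prefix type} with $t\le 2$ or \emph{suffix type} with $m-t\le 2$; in particular every letter occurs at most five times, the prefix-type distinguishers are all neighbours of the first occurrence $x_1$, and the suffix-type ones are all neighbours of the last occurrence $x_m$.

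From here the plan is to convert these local constraints into the global count $n\le 3r-2$ by charging each occurrence of a letter beyond its first to a path-edge incident to that letter's first or last occurrence. The first- and last-occurrence vertices number at most $r$ each, and the degree-two bound limits how many distinguishers each can absorb, which is what should force both the factor of three and the additive constant. I expect this final bookkeeping to be the main obstacle: the naive charging only yields $n\le 5r$, and squeezing it to $n\le 3r-2$ requires exploiting that a prefix-type distinguisher in the second gap already consumes both of its own neighbours (it links $x_1$ and $x_2$), so that heavy letters are forced to recruit helper vertices that must be paid for elsewhere. The degree-one endpoints of the path and the possible coincidence of first and last occurrences (letters used once) must be handled carefully to pin down the exact constant. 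The one genuinely separate case is $P_3$, where the no-twins property fails; there one simply checks by hand that the lettericity equals $2$.
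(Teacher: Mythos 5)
Your lower bound---which is the heart of the theorem---is not actually proved, and you acknowledge this yourself. The reformulation (every path realized on $r$ letters has at most $3r-2$ vertices) is correct, and your machinery is sound as far as it goes: uniformity of adjacency between a letter class and any other vertex, the prefix/suffix classification of gap distinguishers via Lemma 1, and the conclusion that each letter occurs at most five times. But that only gives $n\le 5r$, and the promotion to $n\le 3r-2$ is precisely the step you defer as ``the main obstacle,'' with only a heuristic about helper vertices being ``paid for elsewhere.'' That bookkeeping is not routine: it must rule out (or compensate for) letters occurring four or five times, handle letters occurring once, and extract the additive constant $-2$; moreover the inequality is tight (optimal letterings have two letters occurring twice and the rest three times), so a charging argument has no slack to waste. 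Until it is carried out you have no bound better than $\ell(P_n)\ge n/5$. The paper avoids global counting entirely with a rigidity argument: $P_{3r+1}$ (indeed already $P_{3r-1}$) contains an induced $rK_2$; an $r$-lettering of the path restricts to an $r$-lettering of that $rK_2$; by the paper's Lemma 3 every letter must then encode the two endpoints of one $K_2$, forcing $(a,a)\in D$ for every letter $a$; hence every letter class of the \emph{whole path} is a clique, so has size at most two, giving $n\le 2r$, a contradiction. Your route is genuinely different and might be completable, but the decisive quantitative step is missing. (A minor slip in the same part: equal-letter vertices need not be nonadjacent---if $(a,a)\in D$ the class is a clique; such letters occur at most twice, so they are harmless, but the case must be addressed.)

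The upper bound is also only a sketch, and its inductive step fails as literally stated: the three vertices attached to the end of the path cannot all carry the new letter (they would induce a clique or an anticlique, never a $P_3$), and the new symbols cannot simply be appended at the end of the word, since a vertex adjacent to exactly one of two equal-lettered vertices must, by Lemma 1, lie \emph{between} them in the word. So the new letter can label only two of the three new vertices, the third must reuse an old letter, and the new occurrences must be interleaved into the tail of the word. This is all realizable---the paper's explicit word $21\,321\,432\,\cdots\,(r+1)r(r-1)\,(r+1)r$ with decoder $\{(2,1),(3,2),\dots,(r+1,r)\}$ does exactly this, and deleting the first $1$ (and then the last $r+1$) handles the other residues of $n$ modulo $3$---but your sketch neither specifies the insertion points nor verifies that no unwanted edges arise, which is the entire content of the construction.
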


\begin{proof}
We begin with the lower bound; it suffices to examine a path $P_n$ with $n = 3r+1$, which our theorem claims has lettericity $r+1$. Label the vertices of $P_n$ as $i_1,i_2,...,i_{3r+1}$ so that $i_i \sim i_2 \sim ... \sim i_{3r+1}$, and consider its subgraph $P_n[i_2,i_3,i_5,i_6,...,i_{3r-1},i_{3r}] = rK_2$.

\begin{center}
\begin{tikzpicture}[->,>=stealth',shorten >=1pt,auto,node distance=1.5cm,
    main node/.style={thick,circle,draw,font=\sffamily\Large}]
	\begin{scope}
	\tikz [tight/.style={inner sep=1pt}];
		\node[label=above:{\scriptsize $i_1$}] (0) at (.5,.5)[circle,fill,inner sep=1.5pt]{};
		\node[label=above:{\scriptsize $i_3$}] (A) at (1,1)[circle,fill,inner sep=1.5pt]{};
		\node[label=above:{\scriptsize $i_4$}] (A') at (1.5,.5)[circle,fill,inner sep=1.5pt]{};
		\node[label=below:{\scriptsize $i_2$}] (B) at (1,0)[circle,fill,inner sep=1.5pt]{};
		\node[label=above:{\scriptsize $i_6$}] (C) at (2,1)[circle,fill,inner sep=1.5pt]{};
		\node[label=above:{\scriptsize $i_7$}] (C') at (2.5,.5)[circle,fill,inner sep=1.5pt]{};
		\node[label=below:{\scriptsize $i_5$}] (D) at (2,0)[circle,fill,inner sep=1.5pt]{};
		\node[label=above:{\scriptsize $i_9$}] (E) at (3,1)[circle,fill,inner sep=1.5pt]{};
		\node[label=above:{\scriptsize }] (E') at (2.5,.5)[circle,fill,inner sep=1.5pt]{};
		\node[label=below:{\scriptsize $i_8$}] (F) at (3,0)[circle,fill,inner sep=1.5pt]{};
		\node[label=above:{\scriptsize $i_{10}$}] (G) at (3.5,.5)[circle,fill,inner sep=1.5pt]{};
		\node[label=above:{\scriptsize $i_{3r-2}$}] (H) at (5.5,.5)[circle,fill,inner sep=1.5pt]{};
		\node[label=above:{\scriptsize $i_{3r}$}] (I) at (6,1)[circle,fill,inner sep=1.5pt]{};
		\node[label=below:{\scriptsize $i_{3r-1}$}] (J) at (6,0)[circle,fill,inner sep=1.5pt]{};
		\node[label=below:{\scriptsize $i_{3r+1}$}] (J') at (6.5,0.5)[circle,fill,inner sep=1.5pt]{};
		
		\path[-]
		(0) edge node[label distance=1pt, right] {} (B)
		(A) edge node[label distance=1pt, right] {} (B)
		(A) edge node[label distance=1pt, right] {} (D)
		(C) edge node[label distance=1pt, above right] {} (D)
		(C) edge node[label distance=1pt, above right] {} (F)
		(E) edge node[label distance=1pt, below] {} (F)
		(E) edge node[label distance=1pt, below] {} (G)
		(H) edge node[label distance=1pt, below] {} (J)
		(I) edge node[label distance=1pt, above right] {} (J)
		(I) edge node[label distance=1pt, above right] {} (J');
		
    	\path (G) -- node[auto=false]{\ldots} (H);

	\end{scope}
\end{tikzpicture}

\begin{tikzpicture}[->,>=stealth',shorten >=1pt,auto,node distance=1.5cm,
    main node/.style={thick,circle,draw,font=\sffamily\Large}]
	\begin{scope}
	\tikz [tight/.style={inner sep=1pt}];
		\node[label=above:{\scriptsize }] (0) at (.5,.5){};
		\node[label=above:{\scriptsize $i_3$}] (A) at (1,1)[circle,fill,inner sep=1.5pt]{};
		\node[label=below:{\scriptsize $i_2$}] (B) at (1,0)[circle,fill,inner sep=1.5pt]{};
		\node[label=above:{\scriptsize $i_6$}] (C) at (2,1)[circle,fill,inner sep=1.5pt]{};
		\node[label=below:{\scriptsize $i_5$}] (D) at (2,0)[circle,fill,inner sep=1.5pt]{};
		\node[label=above:{\scriptsize $i_9$}] (E) at (3,1)[circle,fill,inner sep=1.5pt]{};
		\node[label=below:{\scriptsize $i_8$}] (F) at (3,0)[circle,fill,inner sep=1.5pt]{};
		\node[label=above:{\scriptsize $i_{3r}$}] (I) at (6,1)[circle,fill,inner sep=1.5pt]{};
		\node[label=below:{\scriptsize $i_{3r-1}$}] (J) at (6,0)[circle,fill,inner sep=1.5pt]{};
		\node[label=below:{\scriptsize }] (J') at (6.5,0.5){};
		
		\path[-]
		(A) edge node[label distance=1pt, right] {} (B)
		(C) edge node[label distance=1pt, above right] {} (D)
		(E) edge node[label distance=1pt, below] {} (F)
		(I) edge node[label distance=1pt, above right] {} (J);
		
    	\path (G) -- node[auto=false]{\ldots} (H);

	\end{scope}
\end{tikzpicture}
\end{center}

Suppose $P_n$ has some $r$-lettering $\Gamma_D(w)$. Then $rK_2$ is a letter graph for some subword of $w$, which must still require an alphabet of size $r$. By Lemma 3, this is only possible if each letter is assigned to a distinct adjacent pair. The vertices encoded by each letter thus form cliques; they then do so in $\Gamma_D(w)$ as well. As $\Gamma_D(w)$ contains no cliques of size larger than 2, no such lettering exists.

The upper bound has already been established by Petkov{\v{s}}ek, but here we show how this bound is obtained from an $r+1$-lettering of $rK_2$. Take an ordering of the adjacent pairs in $rK_2$, and take the lettering of $rK_2$ which assigns to the $i$th adjacent pair the letters $i,i+1$. Since we have $r$ pairs, this requires $r+1$ letters in total.

\begin{center}
	\begin{tikzpicture}[->,>=stealth',shorten >=1pt,auto,node distance=1.5cm,
    main node/.style={thick,circle,draw,font=\sffamily\Large}]
	\begin{scope}
	\tikz [tight/.style={inner sep=1pt}];
		\node[label=above:{\scriptsize 1}] (A) at (1,1)[circle,fill,inner sep=1.5pt]{};
		\node[label=below:{\scriptsize 2}] (B) at (1,0)[circle,fill,inner sep=1.5pt]{};
		\node[label=above:{\scriptsize 2}] (C) at (2,1)[circle,fill,inner sep=1.5pt]{};
		\node[label=below:{\scriptsize 3}] (D) at (2,0)[circle,fill,inner sep=1.5pt]{};
		\node[label=above:{\scriptsize 3}] (E) at (3,1)[circle,fill,inner sep=1.5pt]{};
		\node[label=below:{\scriptsize 4}] (F) at (3,0)[circle,fill,inner sep=1.5pt]{};
		\node (G) at (4,.5){};
		\node (H) at (5,.5){};
		\node[label=above:{\scriptsize r}] (I) at (6,1)[circle,fill,inner sep=1.5pt]{};
		\node[label=below:{\scriptsize r+1}] (J) at (6,0)[circle,fill,inner sep=1.5pt]{};
		
		\path[-]
		(A) edge node[label distance=1pt, right] {} (B)
		(C) edge node[label distance=1pt, above right] {} (D)
		(E) edge node[label distance=1pt, below] {} (F)
		(I) edge node[label distance=1pt, above right] {} (J);
		
    	\path (G) -- node[auto=false]{\ldots} (H);

	\end{scope}
\end{tikzpicture}
\end{center}

The graph above is the letter graph of the word $ 21324354...r(r-1)(r+1)r$ with the decoder $D = \{(2,1),(3,2),\cdots(r+1,r)\}$.

We now add $r-1$ new vertices, giving the $j$th new vertex the label $j+1$ and connecting it to the vertex in the $j$th pair labelled $j$ and the vertex in the $j+1$st pair labeled $j+2$. Finally, we add a vertex labeled 1 adjacent to the vertex in the first pair labeled 2 and a vertex labeled $r+1$ adjacent to the vertex in the last pair labeled $r$.

\begin{center}
\begin{tikzpicture}[->,>=stealth',shorten >=1pt,auto,node distance=1.5cm,
    main node/.style={thick,circle,draw,font=\sffamily\Large}]
	\begin{scope}
	\tikz [tight/.style={inner sep=1pt}];
		\node[label=above:{\scriptsize 1}] (0) at (.5,.5)[circle,fill,inner sep=1.5pt]{};
		\node[label=above:{\scriptsize 1}] (A) at (1,1)[circle,fill,inner sep=1.5pt]{};
		\node[label=above:{\scriptsize 2}] (A') at (1.5,.5)[circle,fill,inner sep=1.5pt]{};
		\node[label=below:{\scriptsize 2}] (B) at (1,0)[circle,fill,inner sep=1.5pt]{};
		\node[label=above:{\scriptsize 2}] (C) at (2,1)[circle,fill,inner sep=1.5pt]{};
		\node[label=above:{\scriptsize 3}] (C') at (2.5,.5)[circle,fill,inner sep=1.5pt]{};
		\node[label=below:{\scriptsize 3}] (D) at (2,0)[circle,fill,inner sep=1.5pt]{};
		\node[label=above:{\scriptsize 3}] (E) at (3,1)[circle,fill,inner sep=1.5pt]{};
		\node[label=below:{\scriptsize 4}] (F) at (3,0)[circle,fill,inner sep=1.5pt]{};
		\node[label=above:{\scriptsize 4}] (G) at (3.5,.5)[circle,fill,inner sep=1.5pt]{};
		\node[label=above:{\scriptsize r}] (H) at (5.5,.5)[circle,fill,inner sep=1.5pt]{};
		\node[label=above:{\scriptsize r}] (I) at (6,1)[circle,fill,inner sep=1.5pt]{};
		\node[label=below:{\scriptsize r+1}] (J) at (6,0)[circle,fill,inner sep=1.5pt]{};
		\node[label=below:{\scriptsize r+1}] (J') at (6.5,0.5)[circle,fill,inner sep=1.5pt]{};
		
		\path[-]
		(0) edge node[label distance=1pt, right] {} (B)
		(A) edge node[label distance=1pt, right] {} (B)
		(A) edge node[label distance=1pt, right] {} (D)
		(C) edge node[label distance=1pt, above right] {} (D)
		(C) edge node[label distance=1pt, above right] {} (F)
		(E) edge node[label distance=1pt, below] {} (F)
		(E) edge node[label distance=1pt, below] {} (G)
		(H) edge node[label distance=1pt, below] {} (J)
		(I) edge node[label distance=1pt, above right] {} (J)
		(I) edge node[label distance=1pt, above right] {} (J');
		
    	\path (G) -- node[auto=false]{\ldots} (H);

	\end{scope}
\end{tikzpicture}
\end{center}

This new graph is the letter graph of the word $21321432543\dots(r+1)r(r-1)(r+1)r$ with the same decoder $D = \{(2,1),(3,2),...(r+1,r)\}$. This gives us a path on $3r+1$ vertices; to obtain a path on $3r$ vertices we remove the first instance of 1 in our word, and to obtain a path on $3r-1$ we additionally remove the last instance of $r+1$.
 \end{proof}

%
%
%
%
%


\end{document}